\renewcommand{\PrintDOI}[1]{\href{http://dx.doi.org/\detokenize{#1}}{doi: \detokenize{#1}}%
  \IfEmptyBibField{pages}{, (to appear in print)}{}}
\def\commutatif{\ar@{}[rd]|{\circlearrowleft}}
\newcommand{\eq}[1][r]
   {\ar@<-3pt>@{-}[#1]
    \ar@<-1pt>@{}[#1]|<{}="gauche"
    \ar@<+0pt>@{}[#1]|-{}="milieu"
    \ar@<+1pt>@{}[#1]|>{}="droite"
    \ar@/^2pt/@{-}"gauche";"milieu"
    \ar@/_2pt/@{-}"milieu";"droite"}
\def\dar[#1]{\ar@<2pt>[#1]\ar@<-2pt>[#1]}
\newcommand{\bigon}[4][r]{
    \ar@/^1pc/[#1]^{#2}_*=<0.3pt>{}="HAUT"
    \ar@/_1pc/[#1]_{#3}^*=<0.3pt>{}="BAS"
    \ar@{=>} "HAUT";"BAS" ^{#4}
  }
\newcommand{\bigons}[6][r]{  
    \ar@/^2pc/[#1]^{#2}_*=<0.3pt>{}="HAUT"
    \ar@{}    [#1]     ^*=<0.3pt>{}="MILIEUHAUT"
                       _*=<0.3pt>{}="MILIEUBAS"
    \ar[#1]_(0.3){#3}                  
    \ar@/_2pc/[#1]_{#4}^*=<0.3pt>{}="BAS"
    \ar@{=>} "HAUT";"MILIEUHAUT" ^{#5}
    \ar@{=>} "MILIEUBAS";"BAS" ^{#6}
  }
\newtheorem{thm}{Theorem}[section]
\newtheorem{pro}[thm]{Proposition}
\newtheorem{lem}[thm]{Lemma}
\newtheorem{cor}[thm]{Corollary}
\theoremstyle{definition}
\newtheorem{df}[thm]{Definition}
\theoremstyle{remark}
\newtheorem{rmk}[thm]{Remark}
\newtheorem{ex}[thm]{Example}
\newtheorem{exs}[thm]{Examples}
\newcommand{\Z}{\mathbb{Z}}
\newcommand\rTo{\longrightarrow}
\newcommand\rRack{\triangleleft}
\def\t{\theta}
\def\utr{\, \underline{\triangleright}\, }
\def\otr{\, \overline{\triangleright}\, }
\title[QT-(Bi)Quandles, Cocycle Enhancements and Link-Hom. of Pretzel Links]{Quasi-trivial Quandles and Biquandles,  Cocycle Enhancements and Link-Homotopy of Pretzel links  }
\author{Mohamed Elhamdadi} 
\address{Department of Mathematics, 
University of South Florida, Tampa, FL 33620, U.S.A.} 
\email{emohamed@math.usf.edu} 
\author{Minghui Liu} 
\address{Florida College, Temple Terrace, FL 33617, U.S.A.} 
\email{LiuM@floridacollege.edu} 
\author{Sam Nelson} 
\address{Claremont McKenna College, Claremont, CA 91711, U.S.A.} 
\email{Sam.Nelson@claremontmckenna.edu}
\begin{document}

\maketitle

\begin{abstract}
We investigate some algebraic structures called {\it quasi-trivial} quandles and we use them to study link-homotopy of pretzel links. Precisely, a necessary and sufficient condition for a pretzel link with at least two components being trivial under link-homotopy is given. We also generalize the quasi-trivial quandle idea to the case of biquandles and consider enhancement of the quasi-trivial biquandle cocycle counting invariant by quasi-trivial biquandle cocycles, obtaining invariants of link-homotopy type of links analogous to the quasi-trivial quandle cocycle invariants in Inoue's article \cite{I}.
   
\end{abstract}

\tableofcontents

\section{Introduction}
In 1954, Milnor \cite{Milnor} introduced a concept called link-homotopy, which significantly simplified the theory of classical links.  Two links are link-homotopy equivalent if one can be transformed to the other by a finite sequence of ambient isotopies where no crossing change is allowed between distinct components of the link but crossing changes are allowed on the same component. Also, in \cite{Milnor} he classified link homotopy for links with up to three components and gave criteria determining when a link is trivial under link-homotopy. The case of four-components link was settled in \cite{Levine} by Levine. In \cite{HL} Habegger and Lin gave a complete classification of links of arbitrarily many components up to link homotopy. 

In this article we study link-homotopy using {\it quasi-trivial quandles}, 
i.e., quandles whose orbits subquandles (see \cite{NW} for instance) are trivial.
Precisely, we give a necessary and sufficient condition for a pretzel link with at least two components to be trivial under link-homotopy. We also generalize the quasi-trivial quandle idea to the case of biquandles and consider enhancement of the quasi-trivial biquandle cocycle counting invariant by quasi-trivial biquandle cocycles, obtaining invariants of link-homotopy type of links analogous to the quasi-trivial quandle cocycle invariants in Inoue's article \cite{I}.

The paper is organized as follows.  A brief review of quandles is given in Section~\ref{prelimsec}.  In Section~\ref{QuasiQ}, we recall the definition of quasi-trivial quandles and give examples. Coloring of links by quasi-trivial quandles is studied in Section~\ref{Color}.  A necessary and sufficient condition for a pretzel link with at least two components to be trivial under link-homotopy is given in Section~\ref{HPL}.  In Section ~\ref{QuasiBiq}, we generalize the quasi-trivial quandle idea to the case of 
biquandles and consider enhancement of the quasi-trivial biquandle cocycle 
counting invariant by quasi-trivial biquandle cocycles, obtaining invariants 
of link-homotopy type of links analogous to the quasi-trivial quandle
cocycle invariants in \cite{I}.  The last section contains some open problems for future research.

\section{Basics of Quandles}\label{prelimsec}
In this section, we collect some basics about quandles that we will need through the paper.  We begin with the following definition taken from \cites{EN, Joyce, Matveev}

\begin{df} \label{quandledef}
	A {\it quandle} is a set $X$ provided with a binary operation 

$\rRack:  X\times X  \rTo X$ which maps $(x,y)$ to $x\rRack y$, such that 
\begin{itemize}
\item[(i)] for all $x,y\in X$, there is a unique $z\in X$ such that $y=z\rRack x$;
\item[(ii)]({\it right distributivity}) for all $x,y,z\in X$, we have $(x\rRack y)\rRack z=(x\rRack z)\rRack (y\rRack z)$;
\item[(iii)] ({\it idempotency}) for all $x\in X$, $x\rRack x=x.$
\end{itemize}
\end{df}
If furthermore $(x\rRack y)\rRack y=x$, for all $x, y \in X$, then the quandles is called a {\it kei} (or involutive quandle).  

Observe that property (i) also reads that for any fixed element $x\in X$, the map $R_x:X\rTo X$ sending $y$ to $y\rRack x$ is a bijection. 
Also, notice that the distributivity condition is equivalent to the relation $R_x(y\rRack z)=R_x(y)\rRack R_x(z)$ for all $y,z\in X$.

We give few examples of quandles here.  More examples can be found in \cite{EN}.

\begin{exs}
\noindent
	\begin{enumerate}
		
		\item
		Any non-empty set $X$ with the operation $x \rRack y=x$ for any $x,y \in X$ is
		a quandle called the {\it trivial} quandle.	
		
		\item
		Let $n$ be a positive integer.
		For  
		$x, y \in \Z_n$ (integers modulo $n$), 
		define
		$x \rRack y = 2y-x \pmod{n}$.
		Then the operation $\rRack$ defines a quandle
		structure  called the {\it dihedral quandle} and denoted
		$R_n$.
		
		\item
	Let $G$ be a group.  The operation $x\rRack y=y^{-1}xy$   makes $G$ into a quandle which is denoted by $Conj(G)$ and is called the {\em conjugation quandle of $G$}. 
	
	\item
	
	The operation 
	$x\rRack y=yx^{-1}y$ makes any group $G$ into a kei and is called the {\em core quandle of $G$}.
	
	\item
	Any ${\Z }[t, t^{-1}]$-module $M$
	is a quandle with
	$x \rRack y=tx+(1-t)y$, $x,y \in M$, called an {\it  Alexander  quandle}.
	
	\end{enumerate}
\end{exs}

A quandle homomorphism $\phi$ from $(X,\ast)$ to $(Y,\rRack)$ is a map from $X$ to $Y$ satisfying $\phi(x\ast y)=\phi(x)\rRack\phi(y)$ for all $x,y \in X$. A quandle homomorphism $\phi: X \rightarrow  X$ that is a bijection is called a quandle {\it automorphism}.  The set of all quandle automorphisms of $X$ forms a group denoted by {\rm Aut(X)}.  The subgroup of {\rm Aut(X)} generated by all bijections $R_x$ is called the {\it inner automorphism group} of $X$ and denoted by {\rm Inn}$(X)$.  The action of the inner automorphism group {\rm Inn}$(X)$ on the quandle $X$ gives the decomposition of $X$ in term of its orbits.  
A quandle $X$ is called {\it connected} if the inner automorphism group {\rm Inn}$(X)$ acts transitively on $X$ (that is there is only one orbit).  For example, the odd dihedral quandles $R_{2n+1}$ are connected.  The orbit decomposition of the dihedral quandle $R_6$ is $\{0,2,4\} \sqcup \{1,3,5\}$.

\section{Quasi-trivial Quandles}\label{QuasiQ}
All the quandles in this paper will be non-connected quandles except if stated otherwise.  A reason for which we are interested in non-connected quandles in this article is their applications to $n$-components links $L=K_1\sqcup \cdots \sqcup K_n \subset \mathbb{S}^3$ with $n \geq 2$. Precisely, we apply quandles to study links up to link homotopy.  We define quasi-trivial quandles following the idea in \cite{H}.

\begin{df}
	A quandle in which  $x\rRack y= x,$ for all $x$ and $y$ belonging to the same orbit, is called a {\it quasi-trivial quandle}.  
\end{df}

\begin{ex}
	Let $Q$ be a quandle and let $RQ$ be the quandle obtained from $Q$ by adjoining the relation that $x\rRack y= x,$ for all $x$ and $y$ belonging to the same orbit. Then $RQ$ is a quasi-trivial quandle.  In particular if $Q(L)$ is the fundamental quandle of a link $L$ \cite{Joyce} then the quasi-trivial quandle $RQ(L)$ is shown to be a link homotopy invariant in ~\cite{H, I}.  It is called the {\it reduced fundamental quandle} in ~\cite{H}.
\end{ex}

\begin{ex}
	The dihedral quandle $R_4=\{0,2\} \sqcup \{1,3\}$ is a quasi-trivial quandle.  Note that $R_{2n}$ is not a quasi-trivial quandle for $n \geq 3$. 
\end{ex}

\begin{ex}
	Any group $G$ is a quandle under the binary operation $x\rRack y=y^{-1}xy$. Under this operation, $G$ is a quasi-trivial quandle if and only if $G$ is a $2-$Engel group; that is, $G$ satisfies the condition that $x$ commutes with $g^{-1}xg$ for all $x,g\in G$. One specific example is the quaternion group $Q_8=\langle {i},{j},{k}\mid {i}^2={j}^2={k}^2={i}\;{j}\;{k}=-1; (-1)^2=1\rangle$.  This quandle has the orbit decomposition $Q_8=\{1\}\sqcup \{-1\}\sqcup \{ \pm {i} \}\sqcup \{ \pm {j} \}\sqcup \{ \pm {k} \}$.
\end{ex}

\begin{ex}
	In an Alexander quandle, two elements $x$ and $y$ are in the same orbit if and only if there exists $z$ such that $x-y=(1-t)z\;$ (~\cite{N1,NP}).  As a consequence, the Alexander quandle $A=\mathbb{Z}_n[t^{\pm 1}]/(1-t)^2$ is a Quasi-trivial quandle for any positive integer $n$.
\end{ex}

\section{Colorings of Links by Quasi-trivial Quandles}\label{Color}

In 1954, Milnor \cite{Milnor} defined the notion of Link-homotopy to study links.  Two links are link-homotopy equivalent if one can be transformed to the other one by a finite sequence of ambient isotopies where no crossing change is allowed between distinct components of the link but crossing changes are allowed on the same component.    
 
A coloring of a link $L$ by a quandle $X$ is a quandle homomorphism from the fundamental quandle $Q(L)$ of the link $L$ to the quandle $X$ (See \cite{EN} for more details). In \cite{H}, it was shown that colorings of links by quasi-trivial quandles are invariant of homotopy links.  In \cite{I}, the author defined a (co)homology theory for quasi-trivial quandles.  He also showed that quandle cocycles associated with $2$-cocycles of quasi-trivial quandles are link homotopy invariants.

\begin{ex}
	Two $2$-torus links $(2,2m)$ and $(2,2n)$ with $m,n\geq 0$ are homotopically equivalent if and only if $m=n$. 
	To see this, we consider the torus link $(2,\left|m-n\right|)$ if necessary, it suffices to prove that if $n>0$, the torus link $T(2,2n)$ is homotopically non-trivial. Let $B_2$ be the braid group with two strings and consider the braid $\sigma_1^{2n}$ whose closure is the link $T(2,2n)$. 
	
	For all $k\geq 2$, using the Alexander quandle ${\mathbb{Z}_k[t^{\pm1}]}/(1-t)^2$, if we color the top arcs of the braid $\sigma_1^2\in B_2$ by the vector $(0,1)$, then the bottom color vector is $(1-t, t)$.  Since $t \neq 1$, we have that the braid $\sigma_1^2$ is homotopically non-trivial using coloring in ${\mathbb{Z}_k[t^{\pm1}]}/(1-t)^2$ for all $k\geq 2$. For the braid $\sigma_1^{2n}$, using coloring in ${\mathbb{Z}_{2n-2}[t^{\pm1}]}/(1-t)^2$, we can prove that $\sigma_1^{2n}$ is homotopically trivial only if $\sigma_1^2$ is homotopically trivial, which is false and thus the proof is completed. 
\end{ex}
\section{Homotopy Pretzel Links}\label{HPL}
It is known (see for example \cite{Kawauchi}) that  pretzel link $(p_1,p_2,\ldots,p_n)$ is a knot if and only if both $n$ and all the $p_i$ are odd or exactly one of the $p_i$ is even.  Since every knot is trivial in the homotopy sense, we only consider the case that the pretzel link $(p_1,p_2,\ldots,p_n)$ has at least two components. 

 In the following we will consider colorings by the Alexander quandle ${\mathbb{Z}_k[t^{\pm1}]}/(t-1)^2$ with $x \rRack y=tx+(1-t)y$.  

\[\includegraphics{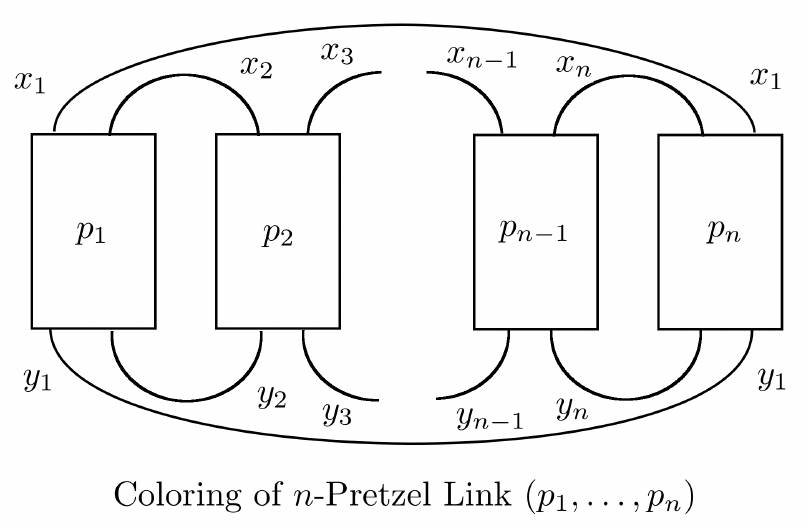}\]

  We denote the top color on the link by $(x_1, x_2, \cdots, x_n)$ and the bottom color by $(y_1, y_2, \cdots, y_n)$ as can be seen in the figure.  For fixed $p_1, \cdots, p_n$ and $k$, note that the bottom vector is uniquely determined by the top vector, therefore the number of colorings is at most $k^{2n}$.

Since we will be studying pretzel links, it will be convenient to have the powers of the matrix  $A=\begin{bmatrix}
0&1\\t&1-t
\end{bmatrix}$.  For an arbitrary integer $k\geq 2$, let 
\[B=A^2=\begin{bmatrix}
t&1-t\\-t^2+t&t^2-t+1
\end{bmatrix}=\begin{bmatrix}
t&1-t\\1-t&t
\end{bmatrix}\] in $M_{2\times 2}\left({\mathbb{Z}_k[t^{\pm1}]}/(t-1)^2\right)$ . An induction argument gives the following lemma
\begin{lem}\label{matrixpower}
	For any non-negative integer $j$,  
	\[B^j= \begin{bmatrix}
	jt-j+1&j-jt\\j-jt& jt-j+1
	\end{bmatrix}.\]
\end{lem}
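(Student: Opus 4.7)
The plan is to proceed by induction on $j$, as the statement of the lemma suggests, the only nontrivial input being the defining relation $(t-1)^2 = 0$ of the coefficient ring $\mathbb{Z}_k[t^{\pm 1}]/(t-1)^2$, which we will use in the form $t^2 = 2t - 1$.

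For the base case $j = 0$, the claimed matrix specializes to $\bigl[\begin{smallmatrix} 1 & 0 \\ 0 & 1 \end{smallmatrix}\bigr] = B^0$. For the inductive step, assuming the formula for some $j \geq 0$, I would compute
\[
B^{j+1} = B^{j}\cdot B = \begin{bmatrix} jt-j+1 & j-jt \\ j-jt & jt-j+1 \end{bmatrix}\begin{bmatrix} t & 1-t \\ 1-t & t \end{bmatrix}
\]
entry by entry. For the $(1,1)$-entry one gets $(jt-j+1)t + (j-jt)(1-t) = 2jt^{2} - 3jt + t + j$; substituting $t^{2} = 2t-1$ collapses this to $(j+1)t - j = (j+1)t - (j+1) + 1$, matching the predicted $(1,1)$-entry at step $j+1$. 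The symmetry of $B$ and the inductive matrix (both are symmetric and have equal diagonal entries and equal off-diagonal entries) means only one diagonal entry and one off-diagonal entry actually need to be checked, cutting the work in half; the off-diagonal calculation is entirely parallel.

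A slicker alternative that I would likely include as a remark is to write $B = I + N$ with
\[
N = (t-1)\begin{bmatrix} 1 & -1 \\ -1 & 1 \end{bmatrix}.
\]
Then $N^{2} = (t-1)^{2}\bigl[\begin{smallmatrix} 2 & -2 \\ -2 & 2 \end{smallmatrix}\bigr] = 0$ in $M_{2\times 2}(\mathbb{Z}_k[t^{\pm 1}]/(t-1)^{2})$, so the binomial expansion of $(I+N)^{j}$ truncates after two terms to $B^{j} = I + jN$, which is exactly the claimed matrix. This avoids induction entirely and makes transparent why the entries are linear in $j$.

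There is essentially no obstacle: the only thing to watch is remembering to reduce $t^{2}$ via $(t-1)^{2} = 0$ at the right moment, and keeping the signs straight in the off-diagonal terms. No results from outside the excerpt are needed.
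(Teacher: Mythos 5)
Your proof is correct and follows the same route as the paper, which simply notes that an induction argument (using $(t-1)^2=0$ to reduce $t^2$) gives the formula. The added remark that $B=I+N$ with $N=(t-1)\bigl[\begin{smallmatrix}1&-1\\-1&1\end{smallmatrix}\bigr]$ nilpotent, so $B^j=I+jN$, is a valid and pleasantly transparent shortcut, but it is not needed beyond the induction the paper already indicates.
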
 
We then have the following

\begin{cor}\label{matrix}
	In $\mathbb{Z}_k[t^{\pm1}] /(t-1)^2 $, the smallest positive integer $l$ such that $B^l=I_2$ is $l=k$.
\end{cor}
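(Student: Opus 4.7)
The plan is to reduce the corollary directly to a linear equation in the coefficient ring $R := \mathbb{Z}_k[t^{\pm 1}]/(t-1)^2$ using Lemma~\ref{matrixpower}. By that lemma, $B^l = I_2$ in $M_{2\times 2}(R)$ holds if and only if the diagonal entries satisfy $lt - l + 1 \equiv 1$ and the off-diagonal entries satisfy $l - lt \equiv 0$, and both of these conditions collapse to the single requirement
\[
l(t-1) = 0 \quad \text{in } R.
\]
So I need to identify, in the ring $R$, the smallest positive integer $l$ that annihilates the element $t-1$.

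To do this I would pass to the more transparent presentation of $R$. Setting $s := t-1$, the element $s$ satisfies $s^2 = 0$, and $t = 1 + s$ is a unit in $\mathbb{Z}_k[s]/(s^2)$ since $(1+s)(1-s) = 1 - s^2 = 1$. Therefore the map $\mathbb{Z}_k[t^{\pm 1}]/(t-1)^2 \to \mathbb{Z}_k[s]/(s^2)$ sending $t \mapsto 1+s$ is a well-defined ring isomorphism, and under this isomorphism every element of $R$ has a unique representative of the form $a + bs$ with $a,b \in \mathbb{Z}_k$. In particular, $l(t-1)$ corresponds to $ls$, which vanishes precisely when $l \equiv 0 \pmod{k}$.

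Combining these two steps, the condition $B^l = I_2$ is equivalent to $l \equiv 0 \pmod k$, and the smallest positive such $l$ is exactly $l = k$. The only subtlety worth stating carefully is the ring identification $R \cong \mathbb{Z}_k[s]/(s^2)$ (in particular, checking that inverting $t$ adds nothing because $t$ is already a unit modulo $(t-1)^2$); once this is in place, the corollary is immediate from Lemma~\ref{matrixpower}.
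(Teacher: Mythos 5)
Your proposal is correct and matches the paper's (implicit) argument: the paper states the corollary as an immediate consequence of Lemma~\ref{matrixpower}, i.e.\ $B^l=I_2$ iff $l(t-1)=0$ in $\mathbb{Z}_k[t^{\pm1}]/(t-1)^2$, which holds iff $k\mid l$. Your extra care in identifying the ring with $\mathbb{Z}_k[s]/(s^2)$, $s=t-1$ (noting $t=1+s$ is already a unit), just makes explicit the step the paper leaves to the reader.
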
 

A straightforward computation gives the following lemma 
\begin{lem}\label{Triv}
	Consider the braid $\sigma_1^k$ in $B_2$.  Using coloring over the Alexander quandle ${\mathbb{Z}_k[t^{\pm1}]}/(t-1)^2$.  Let the top color and the bottom color are respectively $(x,y)$ and $(z,u)$.  Then $x=z$ if and only if $y=u$.
\end{lem}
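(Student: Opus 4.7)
The plan is to express the action of the braid $\sigma_1^k$ on top colors as left multiplication by $A^k$, so that $(z,u)^{\top} = A^k(x,y)^{\top}$ and hence $(z-x,\, u-y)^{\top} = (A^k - I)(x,y)^{\top}$. The task then reduces to showing that the two coordinates of $(A^k - I)(x,y)^{\top}$ vanish simultaneously. I would argue by cases on the parity of $k$.

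If $k = 2m$ is even, then $A^k = B^m$, and Lemma \ref{matrixpower} yields $A^k - I = m(t-1) \cdot C$, where $C$ is the matrix with rows $(1,-1)$ and $(-1,1)$. The two rows are negatives of one another, giving $z - x = m(t-1)(x-y)$ and $u - y = -m(t-1)(x-y) = -(z-x)$, so the equivalence $z = x \iff u = y$ is immediate.

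If $k = 2m+1$ is odd, I would compute $A^k = A \cdot B^m$ directly, repeatedly using the identities $(t-1)^2 = 0$, $t(t-1) = t-1$, and $(1-t)(t-1) = 0$ available in $\mathbb{Z}_k[t^{\pm 1}]/(t-1)^2$ to collapse the entries. After simplification each row of $A^k - I$ factors as a scalar multiple of the vector $(-1,1)$, with the two scalars being $m(t-1)+1$ and $-((m+1)(t-1)+1)$. The algebraic key is that any element of the form $1 + \alpha(t-1)$ is a unit in the ring, because $(1+\alpha(t-1))(1-\alpha(t-1)) = 1 - \alpha^2(t-1)^2 = 1$. Consequently both conditions $z - x = 0$ and $u - y = 0$ reduce to the single condition $x = y$, producing the required biconditional.

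The main obstacle is the odd-parity computation: carrying out $A \cdot B^m$ and simplifying the entries in the quotient ring until both rows of $A^k - I$ are visibly unit multiples of $(-1,1)$. Once that common factorization is exposed, the unit observation closes the argument without further calculation.
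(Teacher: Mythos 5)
Your computation is correct and is exactly the ``straightforward computation'' the paper leaves to the reader: it uses the same matrix $A$ and the formula for $B^j=A^{2j}$ from Lemma \ref{matrixpower}, splitting into even and odd $k$, and your odd-case simplifications (using $(t-1)^2=0$, hence $t(t-1)=t-1$) and the unit observation $(1+\alpha(t-1))(1-\alpha(t-1))=1$ all check out, giving $z-x$ and $u-y$ as unit (or negated-equal) multiples of $y-x$. Nothing further is needed.
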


	Since the case of $2$-pretzel link is different than the case of $n$-pretzel link with $n\geq 3$, we consider the following cases:\\
	 

\subsection*{Homotopy $2$-Pretzel Links}

By considering every homotopy $2$-Pretzel link as a torus link, we conclude that
\begin{lem}
The homotopy $2$-Pretzel link $(p_1,p_2)$ is homotopically trivial if and only if $p_1+p_2=0$.
\end{lem}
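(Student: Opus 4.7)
The plan is to exploit the hint that any 2-pretzel link is isotopic to a 2-braid torus link, which collapses the problem to the case already treated in Example~4.1. Concretely, I would first argue that the 2-pretzel link $(p_1,p_2)$ is ambient isotopic to the closure of the braid $\sigma_1^{p_1+p_2}$ in $B_2$, i.e.\ the torus link $T(2,p_1+p_2)$. This is a standard isotopy: the two vertical twist regions of the pretzel projection can be slid along the two outer arcs until they sit on top of one another, producing a single twist region with $p_1+p_2$ crossings. Since the pretzel link is assumed to have at least two components, $p_1+p_2$ must be even; write $p_1+p_2=2m$ for some $m\in\mathbb{Z}$.

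Next I would handle the easy direction $(\Leftarrow)$: if $p_1+p_2=0$, then $T(2,0)$ is literally the closure of the identity braid on two strands, namely the two-component unlink, which is trivial under link-homotopy.

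For the harder direction $(\Rightarrow)$, I would show that $T(2,2m)$ is homotopically non-trivial whenever $m\neq 0$. The case $m>0$ is exactly the content of Example~4.1, where the non-triviality is detected by coloring the braid $\sigma_1^{2m}$ by the Alexander quandle $\mathbb{Z}_k[t^{\pm 1}]/(t-1)^2$ for a suitable $k$ (using the matrix power computation of Lemma~\ref{matrixpower} and Corollary~\ref{matrix}). For $m<0$, the link $T(2,2m)$ is the mirror image of $T(2,-2m)$, and since link-homotopy commutes with taking mirrors (a link-homotopy from $L$ to the unlink induces a link-homotopy from the mirror of $L$ to the mirror of the unlink, which is again the unlink), non-triviality of $T(2,-2m)$ implies non-triviality of $T(2,2m)$. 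Alternatively, one can repeat the coloring argument of Example~4.1 verbatim using $B^{-1}$ in place of $B$ and invoke Corollary~\ref{matrix} applied to the inverse matrix.

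The only genuine obstacle is the mild one of extending Example~4.1 from $m\geq 0$ to negative values of $m$; both of the routes above dispose of it cleanly, so the proof should be short once the identification with $T(2,p_1+p_2)$ is in hand.
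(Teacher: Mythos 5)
Your proposal follows essentially the same route as the paper: the paper's entire argument is the observation that every $2$-pretzel link is a torus link, after which triviality is settled by the coloring computation of the torus links $T(2,2n)$ over the quasi-trivial Alexander quandle $\mathbb{Z}_k[t^{\pm 1}]/(1-t)^2$ given earlier. Your explicit treatment of the negative case via mirror images (or via $B^{-1}$) just fills in a detail the paper leaves implicit, so the proof is correct and matches the intended argument.
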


\subsection*{Homotopy $n$-Pretzel Links}
Let $(p_1,p_2,\ldots,p_n)$ be an $n$-Pretzel link with $N$ components.  Then one easily proves the following lemma:
\begin{lem}
	  
If all $p_i$s are odd, then $N=\frac{3+(-1)^n}{2}$; but if at least one of the $p_i$s is even, then $N$ is the number of even $p_i$s. 
\end{lem}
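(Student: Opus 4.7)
Plan: The plan is to realize the pretzel link $P(p_1,\ldots,p_n)$ combinatorially as a $2$-regular bipartite multigraph $\Gamma$ whose cycles correspond bijectively to link components, and then count cycles in the two cases. Label the $n$ top arcs of the standard diagram by $U_1,\ldots,U_n$, where $U_j$ is the arc between tangles $T_j$ and $T_{j+1}$ (indices mod $n$), and the bottom arcs by $D_1,\ldots,D_n$ similarly. Each tangle $T_i$ contributes exactly two edges to $\Gamma$: if $p_i$ is odd, the strands cross, giving $U_{i-1}\sim D_i$ and $U_i\sim D_{i-1}$; if $p_i$ is even, the strands are parallel, giving $U_{i-1}\sim D_{i-1}$ and $U_i\sim D_i$. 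Since $\Gamma$ is then $2$-regular, its connected components are cycles and are in bijection with the components of $L$.

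All-odd case. Every $U_j$ is adjacent precisely to $D_{j-1}$ and $D_{j+1}$, and dually for $D_j$. The walk $U_1\to D_2\to U_3\to D_4\to\cdots$ therefore advances the index by one per step, and first returns to $U_1$ after $2n/\gcd(2,n)$ steps. Hence $\Gamma$ is a single cycle of length $2n$ when $n$ is odd and splits into two cycles of length $n$ (on the odd- and even-indexed vertex sets) when $n$ is even, giving $N=\gcd(2,n)=(3+(-1)^n)/2$.

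At-least-one-even case. Let $i_1<\cdots<i_m$ be the positions of the even $p_i$'s and partition $\mathbb{Z}/n$ into the arcs $A_k=\{i_k,i_k+1,\ldots,i_{k+1}-1\}$ (with $i_{m+1}=i_1$). First I verify that $\Gamma$ has no edge between different arcs: for odd $p_i$, the indices $i-1$ and $i$ lie in a common arc, so both edges $U_{i-1}\sim D_i$ and $U_i\sim D_{i-1}$ stay inside it; for even $p_{i_k}$, the edges $U_{i_k-1}\sim D_{i_k-1}$ and $U_{i_k}\sim D_{i_k}$ are confined to $A_{k-1}$ and $A_k$ respectively. Thus $\Gamma=\bigsqcup_k\Gamma_k$, where $\Gamma_k$ is supported on $\{U_j,D_j:j\in A_k\}$, and a direct count (two edges per odd tangle in $A_k$, one each from the even tangles at $i_k$ and $i_{k+1}$) gives $|V(\Gamma_k)|=|E(\Gamma_k)|=2|A_k|$ with $\Gamma_k$ still $2$-regular. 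Tracing the walk starting at $U_{i_k}$, it zig-zags forward through the odd tangles of $A_k$, bounces at the far even tangle $T_{i_{k+1}}$ via its $\Gamma_k$-edge $U_{i_{k+1}-1}\sim D_{i_{k+1}-1}$, returns along the opposite side, and closes via the $\Gamma_k$-edge $U_{i_k}\sim D_{i_k}$ of $T_{i_k}$; this visits each of the $2|A_k|$ vertices exactly once (with the degenerate case $|A_k|=1$ producing a 2-cycle formed by the multi-edge $U_{i_k}\sim D_{i_k}$). Hence each $\Gamma_k$ is a single cycle and $N=m$ equals the number of even $p_i$'s.

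The main obstacle is the bookkeeping for this forward-bounce-backward walk inside each $\Gamma_k$, and correctly attributing each edge of $\Gamma$ to the unique arc containing it; once those assignments and the walk structure are in hand, the cycle count is routine.
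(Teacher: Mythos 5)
Your proof is correct: the $2$-regular arc graph faithfully encodes the strand connectivity of the pretzel diagram (odd twist regions cross the two strands, even ones leave them parallel), and your cycle counts in both cases give exactly the claimed formula, including the degenerate cases of a single even entry and of adjacent even entries producing a doubled edge. The paper states this lemma without proof (``one easily proves''), and your argument is precisely the standard strand-tracing computation it has in mind; the only step worth making explicit is that in the all-odd case with $n$ even the complementary $n$ vertices also form a single cycle (the same walk started at $U_2$), so that $N=2$ exactly.
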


\begin{lem}\label{evenpretzel}
	Let $n \geq 3$ and let all $p_i$s be even numbers. Then the $n$-Pretzel link $(p_1,p_2,\ldots,p_n)$ is homotopically trivial if and only if all the $p_i$s are zero. 
\end{lem}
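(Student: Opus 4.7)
The ``if'' direction will be immediate: when every $p_i=0$ each tangle is two parallel strands, so the diagram has no crossings and its closure is visibly the $n$-component unlink, which is already homotopically trivial.

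For the converse I plan to use colorings by the Alexander quandle $R_k := \mathbb{Z}_k[t^{\pm 1}]/(t-1)^2$. This quandle is quasi-trivial: two elements lie in the same orbit iff their difference is in $(1-t)R_k$, and then $(1-t)^2=0$ forces $x\rRack y = x+(1-t)(y-x) = x$. Hence by \cite{H,I} the coloring count is a link-homotopy invariant, and the $n$-component unlink contributes $|R_k|^n = k^{2n}$ colorings. My aim is to show that the pretzel link can produce $k^{2n}$ colorings for every $k\ge 2$ only when all $p_i$ vanish.

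With top arcs labeled $x_1,\ldots,x_n$ and bottom arcs $y_1,\ldots,y_n$ as in the figure preceding Lemma~\ref{matrixpower}, tangle $i$ has top pair $(x_i,x_{i+1})$ and bottom pair $(y_i,y_{i+1})$ (indices mod $n$; no permutation occurs since $p_i$ is even). Writing $m_i = p_i/2$, Lemma~\ref{matrixpower} gives $(y_i,y_{i+1})^T = B^{m_i}(x_i,x_{i+1})^T$. Forcing the two expressions for each $y_i$ (arising from tangles $i-1$ and $i$) to agree will produce the $n$ consistency relations
\[
(1-t)\bigl(m_{i-1}\,x_{i-1} + (m_i - m_{i-1})\,x_i - m_i\,x_{i+1}\bigr) = 0.
\]
Writing $x_i = a_i + (1-t)b_i$ with $a_i,b_i\in\mathbb{Z}_k$ and invoking $(1-t)^2=0$, these reduce to the linear congruences $m_{i-1}a_{i-1} + (m_i - m_{i-1})a_i - m_i a_{i+1} \equiv 0 \pmod k$ on the $a_i$'s alone; the $b_i$ remain free, and once the $x_i$ are fixed the $y_i$ are determined. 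Thus the number of colorings equals $k^n \cdot |\ker M|$, where $M$ is the $n\times n$ matrix over $\mathbb{Z}_k$ whose $i$-th row has entries $m_{i-1},\ m_i - m_{i-1},\ -m_i$ in columns $i-1, i, i+1$.

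Matching $k^n\cdot|\ker M| = k^{2n}$ requires $|\ker M| = k^n$, i.e., $M\equiv 0\pmod k$, which forces $m_i \equiv 0\pmod k$ for every $i$. Since this must hold for all $k\ge 2$, each $m_i$ vanishes and all $p_i = 0$ (equivalently, if some $p_j\ne 0$ then any $k>|p_j/2|$ already detects nontriviality). The main piece to verify carefully is the third paragraph: matching the figure's labeling conventions so that consistency of the $y_i$'s yields exactly the stated relations, and checking that multiplication by $(1-t)$ genuinely decouples the $b_i$-sector. Everything beyond that is bookkeeping.
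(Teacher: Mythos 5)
Your proof is correct and follows the paper's overall strategy: color by the quasi-trivial Alexander quandle $\mathbb{Z}_k[t^{\pm1}]/(t-1)^2$, invoke the link-homotopy invariance of quasi-trivial colorings from \cite{H,I}, extract that $k$ divides each $p_i/2$, and let $k$ vary. The difference lies in how the divisibility is obtained. The paper works box-by-box: it asserts that homotopical triviality forces $x_i=y_i$ and $x_{i+1}=y_{i+1}$ at the $i$-th box, so that $B^{p_i/2}=I_2$, and then applies Corollary \ref{matrix} (the order of $B$ is exactly $k$). You instead make the count explicit: writing $x_i=a_i+(1-t)b_i$, the coloring space of the diagram has size $k^n\cdot|\ker M|$ for your circulant-type matrix $M$ in the $m_i=p_i/2$, and comparison with the $k^{2n}$ colorings of the $n$-component unlink forces $M\equiv 0\pmod k$, hence $m_i\equiv 0\pmod k$; Corollary \ref{matrix} is bypassed. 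This buys something: the paper's inference ``same orbit, thus $x_i=y_i$'' is not valid on its own (orbits of $\mathbb{Z}_k[t^{\pm1}]/(t-1)^2$ are cosets of $(1-t)$, not singletons), and your counting argument is exactly the missing justification that all top vectors, e.g.\ $x_{i+1}=1$ and the rest $0$, must satisfy the consistency relations. Your flagged caveat about the figure's conventions is harmless: under any labeling, each bottom arc receives two expressions from adjacent boxes whose difference is $(1-t)$ times a linear form with coefficients $\pm m_{i-1},\pm(m_i-m_{i-1}),\mp m_i$, and the conclusion $m_i\equiv 0\pmod k$ is unchanged.
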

\begin{proof}
Assume that the $n$-Pretzel link $(p_1,p_2,\ldots,p_n)$ is homotopically trivial. Now fix a positive integer $k$ and color the link by the quasi-trivial quandle ${\mathbb{Z}_k[t^{\pm1}]}/(t-1)^2$. 
For each $i$-th box, since $p_i$ is even, $x_i$ and $y_i$ are in the same orbit and thus $x_i=y_i$.  The same argument gives  that $x_{i+1}=y_{i+1}$. Now by Corollary \ref{matrix}, the integer $\frac{p_i}{2}$ is a multiple of $k$. Since $k$ is arbitrary, $p_i=0$ for every $i$.
	
\end{proof}

\begin{lem}
Assume that at least two $p_i$s are even, then the pretzel link is homotopically trivial if and only if all the even $p_i$s are zero.
\end{lem}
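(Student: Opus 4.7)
The plan is to parallel Lemma~\ref{evenpretzel}: prove each direction separately, with the ``only if'' direction using a coloring-count argument via the quasi-trivial Alexander quandle $X = \mathbb{Z}_k[t^{\pm 1}]/(t-1)^2$.

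For the ``if'' direction, I would start by observing that, under the hypothesis that at least two $p_i$'s are even, each odd box has both of its strands in the same link component while each even box has them in distinct components; this is a direct consequence of tracing the link components through the boxes. Once all even $p_i$'s are zero the even boxes carry no crossings at all, so every remaining crossing is a self-crossing and can be changed freely under link-homotopy. After finitely many self-crossing changes and an ambient isotopy the diagram represents the $N$-component unlink, giving link-homotopy triviality.

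For the ``only if'' direction, I would imitate the proof of Lemma~\ref{evenpretzel}. Assume the link is link-homotopy trivial and color it by $X$. The coloring number by a quasi-trivial quandle is a link-homotopy invariant, so it equals that of the $N$-component unlink, namely $|X|^{N} = k^{2N}$. The relations $(y_i, y_{i+1})^T = A^{p_i}(x_i, x_{i+1})^T$ at each box, together with the consistency of the two expressions for each $y_i$ obtained from boxes $B_{i-1}$ and $B_i$, give a linear system on $(x_1,\dots,x_n) \in X^n$. Forcing its solution space to have size $k^{2N}$ and invoking Lemma~\ref{matrixpower} and Lemma~\ref{Triv} produces, at each even index $i$, the identity $B^{p_i/2}(x_i, x_{i+1})^T = (x_i, x_{i+1})^T$ with $(x_i, x_{i+1})$ free in $X^2$. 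Corollary~\ref{matrix} then forces $k \mid p_i/2$, and letting $k$ vary gives $p_i = 0$ at every even index.

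I expect the last step to be the hardest: unlike in the pure-even setting, the odd boxes contribute extra consistency equations coupling the constant and $(t-1)$-parts of neighbouring $x_j$'s. The delicate part will be a careful case analysis on the parity pattern $(\epsilon_{i-1}, \epsilon_i)$ at each consistency index, verifying that after accounting for the constraints coming from maximal runs of odd boxes the pair $(x_i, x_{i+1})$ around each even box retains enough freedom in $X^2$ for Corollary~\ref{matrix} to apply.
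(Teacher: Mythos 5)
Your ``if'' direction is fine and is essentially the paper's: with at least two even boxes, the strands of every odd box lie in one component, so once the even boxes are empty there are no crossings between distinct components and the link is link-homotopically trivial.

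The ``only if'' direction, however, has a genuine gap, and it sits exactly where you say you expect difficulty. Your plan hinges on the claim that homotopy-triviality (coloring count $=k^{2N}$) forces, at every even box, the identity $B^{p_i/2}(x_i,x_{i+1})^T=(x_i,x_{i+1})^T$ with $(x_i,x_{i+1})$ ranging over all of $X^2$. Neither half of this is established: the crossing relations only equate the outputs of adjacent boxes along the shared bottom arcs, so when odd boxes are interspersed they do not say that an even box acts as the identity on its top colors; and the surjectivity of the projection of the $k^{2N}$-element solution set onto the pair of coordinates $(x_i,x_{i+1})$ is asserted, not proved. Deferring this to ``a careful case analysis on the parity pattern'' leaves the core of the lemma unproven. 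The paper avoids this global analysis by first normalizing the link by link-homotopy moves: self-crossing changes (legitimate because odd-box crossings are self-crossings) reduce every odd $p_i$ to $1$, and flypes collect the odd boxes into a single run, giving the form $(2j_1,1,\dots,1,2j_2,\dots,2j_N)$. In that normal form a coloring by $\mathbb{Z}_k[t^{\pm 1}]/(t-1)^2$ is determined by $N$ free parameters $x_1,\dots,x_N$, triviality guarantees every choice extends, and two or three specific choices give $j_1=j_2=\cdots=j_N$; a further self-crossing reduction to $s=0$ or $s=1$ then finishes via Lemma~\ref{evenpretzel} (for $s=0$) or one more explicit coloring forcing $j_1=0$ (for $s=1$). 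If you want to keep your route, you must actually carry out the parity case analysis (or prove the projection/fixed-point claims directly); otherwise adopt the normalization step, which is what makes the counting argument tractable.
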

\begin{proof} 
If all $p_i$s are zero, then the pretzel link is a disjoint union of $N$ knots, where $N=\#$ of even $p_i$s, thus it is homotopically trivial.\\
Now we prove the "only if" part.  By applying self-crossing changes if necessary, we assume that any odd $p_i$ is equal to $1$.  By applying {\it flypes} over the boxes with even $p_i$s, if necessary, we assume that the pretzel link has the form
\[ (2j_{1}, \underbrace{1, \ldots, 1}_{s}, 2j_2, \ldots, 2j_N).
\]
Fix an integer $k\geq 3$, we color the pretzel link using the quasi-trivial quandle ${\mathbb{Z}_k[t^{\pm1}]}/(t-1)^2$ as in the figure
\[\includegraphics{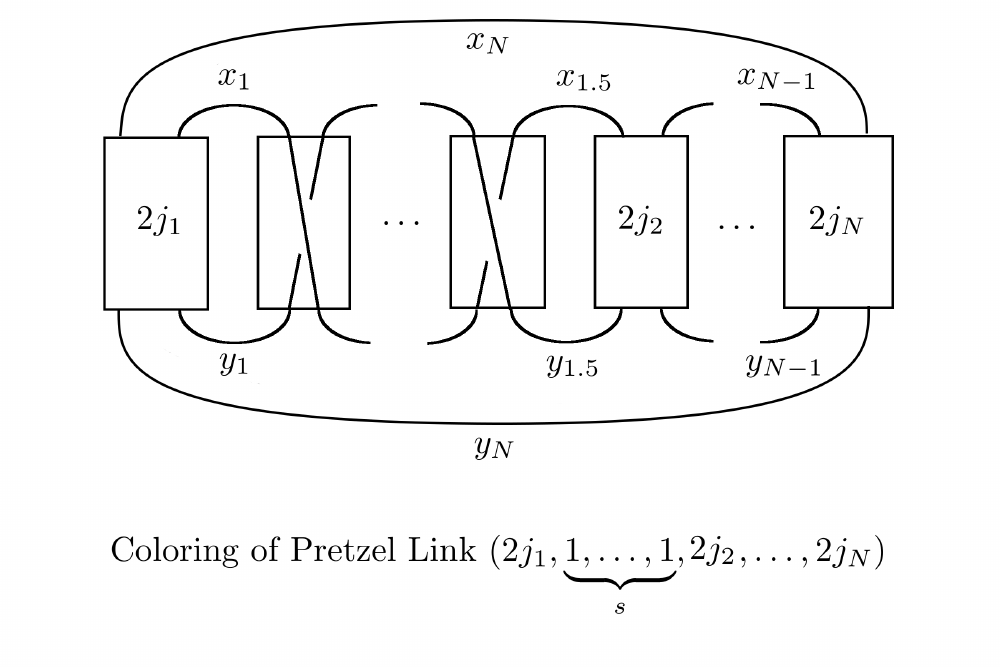}\]
Note that the coloring is completely determined by the values of $x_1, \dots, x_N$.  By choosing $x_1=1$ and $x_i=0$ for $2 \leq i \leq N$, we conclude that $j_1=j_2$.  A similar argument shows that $j_1=j_2=\cdots=j_N$.  By applying self-crossing changes, we can assume that $s=0$ or $s=1$.  The case $s=0$ is proved in Lemma~\ref{evenpretzel}.  Now assume that $s=1$, choosing $x_N=0$, $x_1=1$ and $x_2=j_1(t-1)+1$ implies that $j_1=0$ and thus this reduces also to the case proved in Lemma~\ref{evenpretzel}. 

\end{proof}

\begin{lem}
	Let $n=2m \geq 4$ be a positive integer  and let all $p_i$s be odd numbers. Then the $n$-Pretzel link $(p_1,p_2,\ldots,p_n)$ is not homotopically trivial. 
\end{lem}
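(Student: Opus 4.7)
The plan is to use coloring numbers by the quasi-trivial Alexander quandle $X=\mathbb{Z}_k[t^{\pm 1}]/(t-1)^2$, which are link-homotopy invariants by Section~\ref{Color}, and to show that for a suitable $k$ this count is strictly smaller than the $|X|^2=k^4$ colorings of the trivial $2$-component link. Since $n=2m$ with all $p_i$ odd means that $L=P(p_1,\dots,p_{2m})$ has exactly two components, exhibiting a single $k$ for which the coloring count differs from $k^4$ will force $L$ to be link-homotopically nontrivial.

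First, I would label the top arcs $x_1,\dots,x_{2m}$ and the bottom arcs $y_1,\dots,y_{2m}$ cyclically, with box $i$ imposing $(y_i,y_{i+1})^\top=A^{p_i}(x_i,x_{i+1})^\top$. Writing $p_i=2j_i+1$ and $a=t-1$, so that $a^2=0$ and $(1+ca)^{-1}=1-ca$ for every $c\in\mathbb{Z}$, Lemma~\ref{matrixpower} gives a short closed form for $A^{p_i}=A\cdot B^{j_i}$. Equating the two expressions for $y_{i+1}$ coming from boxes $i$ and $i+1$ produces a first-order recurrence on the differences $d_i=x_{i+1}-x_i$ of the form
\[
d_{i+1}=-(1+(j_i+1-j_{i+1})\,a)\,d_i,
\]
which iterates (using $a^2=0$ to collapse all cross-terms) to $d_i=(-1)^{i-1}(1+(j_1+i-1-j_i)\,a)\,d_1$.

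Next I would impose the cyclic closures $d_{2m+1}=d_1$ and $\sum_{i=1}^{2m}d_i=0$; because $n=2m$ is even and $j_{2m+1}=j_1$, these simplify respectively to $2m\cdot a\cdot d_1=0$ and $(m+S_2)\cdot a\cdot d_1=0$, where $S_2=\sum_{i=1}^{2m}(-1)^{i-1}j_i$. Writing $d_1=u+va$ with $u,v\in\mathbb{Z}_k$, so that $a\,d_1=ua$, these constraints read $2m\,u\equiv 0$ and $(m+S_2)\,u\equiv 0\pmod k$, while $v\in\mathbb{Z}_k$ and $x_1\in X$ remain free. Counting then yields a total of $k^{3}\cdot\gcd(2m,m+S_2,k)$ colorings of $L$.

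Finally, for any $k>2m$ (for example $k=2m+1$) one has $\gcd(2m,m+S_2,k)\leq 2m<k$, so $L$ admits strictly fewer than $k^4$ colorings and hence cannot be link-homotopic to the trivial $2$-component link. The main obstacle will be the bookkeeping in the recurrence and closure steps: the interplay between the nilpotency $a^2=0$, the alternating sign produced by each $p_i$ being odd, and the telescoping $\sum_i(j_i-j_{i+1})=0$ must all be tracked simultaneously in order to reduce the two cyclic closure conditions to the clean congruences on $u$ stated above.
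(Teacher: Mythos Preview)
Your approach is essentially the same as the paper's: both use the quasi-trivial Alexander quandle over $\mathbb{Z}[t^{\pm1}]/(t-1)^2$, derive the same first-order recurrence on the top colors (the paper writes it for the $x_i$ directly, you for the differences $d_i=x_{i+1}-x_i$), and extract the key obstruction $2m=0$ from the cyclic closure via the telescoping $\sum_i(j_i-j_{i+1})=0$. The only difference is packaging: the paper works over $\mathbb{Z}$ and shows that the single choice $x_1=0,\ x_2=1$ fails to extend, whereas you work over $\mathbb{Z}_k$ and count that the link has $k^{3}\gcd(2m,m+S_2,k)<k^{4}$ colorings for $k>2m$.
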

\begin{proof}  In this proof, we are using the quasi-trivial quandle $\mathbb{Z}[t^{\pm 1}]/(1-t)^2$. \\ For $  i \in \{1, \cdots, n\}$, let $p_i=2k_i+1$.  Recall that 
		$A= \begin{bmatrix}
		0&1\\
		t& 1 -t
		\end{bmatrix}$ and thus 
		\[A^{2k_i+1}= \begin{bmatrix}
		k_i -k_i t&1-k_i+k_i t\\
		t-k_i+k_i t& 1-k_i t +k_i -t
		\end{bmatrix}.\]
		 Let the top color on the link be $(x_1, x_2, \cdots, x_n)$, then one obtains the following equation 
		 \begin{equation}\label{x13}
		 (k_1(t-1)+t)x_1 + (1-t)(1+k_1-k_2)x_2 + (k_2(1-t)-1)x_3=0.
		 \end{equation}
		 Multiplying both sides of this equation by $t-1$ gives $(x_3-x_1)(t-1)=0$, thus $x_3=x_1 + \lambda (t-1)$ for some constant $\lambda \in \mathbb{Z}$.  Substituting $x_3$ in the equation~\ref{x13}, we get 
		 \[
		 (k_1(t-1)+t +(k_2(1-t)-1))x_1 + (1-t)(1+k_1-k_2)x_2 -\lambda (t-1)=0,
		 \]
		 thus
		 \[
		 x_3=x_1  + (1-t)(1+k_1-k_2)(x_2-x_1).
		 \]
		By induction we obtain that for all $i$, $x_i$ is completely determined by $x_1$ and $x_2$.  Thus the coloring space of the $n$-pretzel homotopy link is at most two-dimensional over $\mathbb{Z}$.  Similarly, we obtain that for any $l \in \mathbb{Z}_n$
		
		\[
		x_l=x_{l-2}  + (1-t)(1+k_{l-2}-k_{l-1})(x_{l-1}-x_{l-2}).
		\] 
		
		Let us denote $1+k_{l-1}-k_{l}$ by $a_l$ and choose $x_1=0$ and $x_2=1$, by induction we obtain that 
		
			\[
			x_{2j-1}=(a_2 +a_4 + \cdots +a_{2j-2})(1-t),  
			\] 
			and 
			
			\[
			x_{2j}=1-(a_3 +a_5 + \cdots +a_{2j-1})(1-t).  
			\] 	
			Therefore 
			\[x_1=x_{2m+1}=(a_2 +a_4 + \cdots +a_{2m})(1-t)\]
			 and \[x_2=x_{2m+2}=1-(a_3 +a_5 + \cdots +a_{2m+1})(1-t)=1-(a_1+ a_3 + \cdots +a_{2m-1})(1-t).\]
			 Since $x_1=0$ and $x_2=1$, we obtain that 
			 \[a_1+ a_3 + \cdots +a_{2m-1}=0\] and \[a_2 +a_4 + \cdots +a_{2m}=0.\] Adding these two equations gives the contradiction that $2m=0$.  Therefore, the choice of $x_1=0$ and $x_2=1$ gives a vector that does not color the $2m$-pretzel homotopy link and thus this link is not trivial under link-homotopy.  
		 
\end{proof}


Thus we have proved the following theorem:

\begin{thm}
	The pretzel link $(p_1, \ldots, p_n)$ with at least two components is homotopically trivial if and only if one of the following two conditions is satisfied:
	\begin{itemize}
		\item 
		$n=2$ and $p_1+p_2=0$
		\item
		$n \geq 3$ with the assumption that not all $p_i$ are odd, then every $p_j$ is either odd or zero.
	\end{itemize}
\end{thm}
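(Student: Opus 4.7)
The plan is to derive the theorem by assembling the preceding lemmas according to $n$ and to the parity distribution of the $p_i$, using the component-count lemma to discard subcases that are actually knots. The case $n=2$ is handled directly by the $2$-Pretzel lemma of this subsection, which yields the first bullet without further work.

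For $n \geq 3$ the argument splits on the number of even exponents. If exactly one $p_i$ is even, or if all $p_i$ are odd with $n$ odd, the component-count lemma gives $N=1$; the link is then a knot and is excluded by the hypothesis of at least two components, so this situation contributes nothing to the classification. If all $p_i$ are odd with $n=2m\geq 4$, then $N=2$ and the final all-odd lemma of the subsection shows the link is \emph{not} link-homotopy trivial, which is consistent with the theorem omitting this configuration from the list of trivial ones. Finally, if at least two $p_i$ are even, the ``at least two even'' lemma (which relies on Lemma~\ref{evenpretzel} for the all-even reduction) gives exactly the characterization ``trivial iff every even $p_i$ vanishes'', i.e., every $p_j$ is odd or zero, which is the second bullet.

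Combining these subcases exhausts the possible $(n;p_1,\ldots,p_n)$ with $n\geq 3$ and at least two components, and the trivial configurations are precisely those listed in the theorem. I do not expect any genuine obstacle here: the substantive computations---the matrix powers of $A$, the coloring analyses of the homotopy $n$-Pretzel braid, and the explicit choices of the $x_i$---have already been performed in the preceding lemmas, so the main theorem is a bookkeeping synthesis rather than a fresh argument. The only point requiring care is to check that the component-count lemma correctly places each parity pattern into the ``knot'' or ``genuine link'' branch before invoking the corresponding lemma.
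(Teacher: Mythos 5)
Your proposal is correct and follows the paper's own route: the paper proves the theorem precisely by assembling the preceding lemmas (the $2$-pretzel lemma, the component-count lemma to discard the knot cases, the all-even and at-least-two-even lemmas, and the all-odd even-$n$ non-triviality lemma), exactly as you do. The bookkeeping you describe, including using the component count $N$ to sort each parity pattern into the knot or genuine-link branch, is the intended proof.
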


\noindent
Now we summarize our discussion on the triviality of pretzel links under link-homotopy as follows.  Let $E$ denote the number of even $p_i$s in a pretzel link $(p_1, \ldots, p_n)$.  We have the following:

\begin{enumerate}
	\item 
	$n=2$
	\begin{enumerate}
		\item $p_1+p_2$ is odd, the link is a the trivial knot.
		\item $p_1+p_2$ is even
		\begin{enumerate}
			\item 	$p_1+p_2=0$, the link is trivial.
			\item 
			$p_1+p_2\neq 0$, the link is non-trivial.
		\end{enumerate}

	\end{enumerate}
	\item
	$n \geq 3$
	\begin{enumerate}
		\item 
		$E=0$
		\begin{enumerate}
			\item 
			$n$ is even, the link is two-component non-trivial.
			
			\item
			$n$ is odd, the link is the trivial knot.
		\end{enumerate}
		\item
		$E=1$, the link is the trivial knot.
		
		\item
		$E\geq 2$
		\begin{enumerate}
			\item 
			every even $p_i$ is zero, the link is trivial.
			
			\item
			at least one even $p_i$ is not zero, the link is non-trivial. 
		\end{enumerate}

	\end{enumerate}
	
\end{enumerate}

Bonahon gave a classification of Montesinos links  \cite{Bonahon} (see also \cite{BurdeZieschang}), from which the following is a corollary: 

\begin{cor}\label{Bonahon}
Given a pretzel link $(p_1, \ldots, p_n)$ with $n \geq 3$ and assume that for each $p_i$, the absolute value $|p_i| >1,$ then the pretzel link is non-trivial.  Furthermore, two such pretzel links are isotopic if and only if they have the same list $(p_1, ...., p_n)$ up to cyclic permutations and reversal of order.	
\end{cor}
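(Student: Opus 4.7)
The plan is to derive both assertions as immediate consequences of Bonahon's classification of Montesinos links. I would first identify the pretzel link $(p_1,\ldots,p_n)$ with the Montesinos link $M(0;\,1/p_1,\ldots,1/p_n)$, noting that each integer tangle of $p_i$ half-twists is exactly the rational tangle of slope $1/p_i$. The hypothesis $|p_i|>1$ for every $i$ guarantees that the denominators of all slopes are at least $2$, so the presentation sits in the normal form required by Bonahon's theorem.

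Bonahon's theorem then states that two Montesinos links of the form $M(e;\,\beta_1/\alpha_1,\ldots,\beta_n/\alpha_n)$ with $n\geq 3$ and every $\alpha_i\geq 2$, written in normal form, are isotopic if and only if their integer parts $e$ agree and the tuples of fractional parts coincide up to a cyclic permutation and a simultaneous reversal of order. Applied to our setting, the integer part is $0$ on both sides, and reducing each slope $1/p_i$ into the standard fundamental domain encodes the integer $p_i$ itself; hence the classification specializes to the statement that $(p_1,\ldots,p_n)$ and $(p_1',\ldots,p_n')$ must agree up to cyclic permutations and reversal, which is the desired uniqueness clause.

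For the non-triviality claim, I would argue by contrapositive: the unknot and any multi-component trivial link admit no Montesinos presentation in Bonahon's normal form with $n\geq 3$ fractional summands of denominator at least $2$, so our pretzel link under the stated hypotheses cannot be trivial. The only real obstacle is bookkeeping, namely aligning sign conventions and fundamental domains so that slopes $1/p_i$ with $p_i$ possibly negative are brought into Bonahon's normal form while the integer part $e$ is adjusted accordingly; once this translation is carried out, both statements follow immediately from Bonahon's theorem, with no additional knot-theoretic input required.
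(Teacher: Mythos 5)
The paper offers no proof of this statement at all: it is recorded as an immediate corollary of Bonahon's classification of Montesinos links, with citations to Bonahon and to Burde--Zieschang. So your proposal is exactly the intended argument written out: identify $(p_1,\ldots,p_n)$ with the Montesinos link whose rational tangles have slopes $1/p_1,\ldots,1/p_n$, note that $|p_i|>1$ makes every denominator at least $2$, and quote the classification (the cyclic sequence of the fractions modulo $1$ up to reversal, together with the rational Euler number), plus the fact that the trivial links admit no such presentation (which, if one wants to justify it rather than assert it, follows because the double branched cover would be Seifert fibered over $S^2$ with at least three exceptional fibers, hence is neither $S^3$ nor a connected sum of copies of $S^1\times S^2$).

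The one step I would push back on is your claim that reducing each slope $1/p_i$ to the standard fundamental domain ``encodes the integer $p_i$ itself.'' This fails when $|p_i|=2$: the slopes $1/2$ and $-1/2$ have the same fractional part, so the cyclic sequence of reduced fractions does not remember the sign of such an entry, and the Euler number (determined here by $\sum 1/p_i$) recovers only how many of these entries are $+2$ versus $-2$, not which positions they occupy. This is not mere bookkeeping: a half twist can be slid from one tangle to an adjacent one inside a Montesinos diagram, so for instance the pretzel links $(2,-2,3,5)$ and $(-2,2,3,5)$ are isotopic even though the tuples are not related by cyclic permutation and reversal of order. Hence your derivation---and, as literally stated, the uniqueness clause of the corollary itself---requires an extra hypothesis or an extra argument in the case where both $+2$ and $-2$ occur among the $p_i$; outside that case the fractional parts together with the Euler number do pin down the tuple, and your citation-based route coincides with the paper's.
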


Let $q$ be the map from the set of pretzel links to the set of homotopy pretzel links defined in the way that $q$ maps every link to its homotopy class. Using Corollary ~\ref{Bonahon}, we give a description of the kernel of the map $q$ in the following corollary:

\begin{cor}\label{Cor}

A pretzel link $l=(p_1,\ldots , p_n)$ is in the kernel of $q$ if and only if either $l$ is a trivial link, or if $n=2$ with $p_1+p_2=0$ or odd, or $n\geq 3$ with one of the following three cases
\begin{itemize}
	\item 

The number $n$ is odd and all $p_i$s are odd,
\item
There is exactly one $p_i$ that is even,

\item
There are at least two $p_i$s being even and all the even $p_i$s are zeros.
\end{itemize}
\end{cor}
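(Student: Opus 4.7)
The plan is to deduce the corollary by combining the Theorem just proved with two auxiliary facts already recorded in the paper: the criterion from the start of Section~\ref{HPL} for when a pretzel link is a knot, and Corollary~\ref{Bonahon} (Bonahon's classification) to rule out stray "accidentally trivial" pretzel links. The key underlying observation is that $\ker q$ consists precisely of those pretzel links that are homotopically trivial, and this happens in exactly two circumstances: either (a) $l$ is a knot, in which case it is tautologically trivial under link-homotopy, or (b) $l$ has at least two components and is homotopically trivial, in which case the preceding Theorem applies.

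First I would dispose of case (a) by matching it to the bullet list. Recall a pretzel link $(p_1,\dots,p_n)$ is a knot iff $n$ and all the $p_i$ are odd, or exactly one $p_i$ is even. For $n=2$ this is the condition $p_1+p_2$ odd; for $n\geq 3$ this splits into ``$n$ odd with all $p_i$ odd'' or ``exactly one $p_i$ even''. These are precisely the second half of the $n=2$ clause and the first two bullets in the $n\geq 3$ list of the corollary. Then I would handle case (b) by reading off the Theorem: for $n=2$, homotopy triviality forces $p_1+p_2=0$ (a trivial link); for $n\geq 3$ with at least two even $p_i$, it forces every even $p_i$ to vanish (also a trivial link); and for $n\geq 3$ with all $p_i$ odd and $n$ even, the link is a genuine $2$-component link that is never homotopically trivial, so no new members enter $\ker q$.

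Next I would observe that the residual clause ``$l$ is a trivial link'' is subsumed by the other clauses but is retained for emphasis: a pretzel link that is actually an unlink is automatically in $\ker q$, and such unlinks among pretzel links occur exactly in the sub-cases $n=2$ with $p_1+p_2=0$ and $n\geq 3$ with all even $p_i$ zero. To certify that no other pretzel link is an unlink in disguise, I would invoke Corollary~\ref{Bonahon}: once every $|p_i|>1$, the pretzel link is non-trivial, which pins down the finite list of ``small'' pretzel presentations that could possibly represent an unlink. Assembling the three paragraphs of matching yields both implications of the iff statement.

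The main obstacle is purely organizational: one must verify that the case split in the corollary covers every combination of $n$, parity pattern, and sign/magnitude of the $p_i$, and that the cases listed as being in $\ker q$ do not overlap inconsistently with the cases declared non-trivial by the Theorem. There is no new topological or algebraic input beyond the preceding Theorem, the knot-criterion, and Bonahon's classification; the proof is a bookkeeping verification rather than a fresh argument.
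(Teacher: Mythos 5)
Your proposal is correct and follows essentially the same route as the paper, which presents this corollary as a bookkeeping consequence of the preceding Theorem (and case summary), the knot criterion for pretzel links stated at the start of Section~\ref{HPL} (knots being tautologically trivial up to link-homotopy, as the paper's remark notes), together with Corollary~\ref{Bonahon} playing the same auxiliary role you give it. The case-matching you carry out is exactly the verification the paper leaves implicit, so there is nothing substantive to add.
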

\begin{rmk}
	
In Corollary ~\ref{Cor} we include the case that the link $l$ is a knot, which is a trivial link in the homotopy sense.  In the context of braids, a description of the kernel of the map $q:B_n\rightarrow \tilde{B}_n$ is given by Goldsmith in \cite{Goldsmith}, where $B_n$ and $\tilde{B}_n$ are braid groups and the homotopy braid group with $n$ strings respectively; See also \cite{Liu} and \cite{MK}.
\end{rmk}

\section{Quasi-trivial Biquandles and Cocycle Enhancements}\label{QuasiBiq}

In this section we generalize the quasi-trivial quandle idea to the case of 
biquandles and consider enhancement of the quasi-trivial biquandle cocycle 
counting invariant by quasi-trivial biquandle cocycles, obtaining invariants 
of link-homotopy type of links which generalize the quasi-trivial quandle
cocycle invariants in \cite{I}. 

\begin{rmk}
While the knot quandle is a complete invariant up to mirror image for classical
knots, there are standard examples (e.g., the Kishino knot) of virtual knots 
whose knot quandle is trivial but whose biquandle is nontrivial. Thus, for the 
purpose of studying link-homotopy in the virtual setting it will be useful to 
extend the notion of quasi-triviality to biquandles.
\end{rmk}

\begin{df}
A \textit{biquandle} is a set $X$ with two binary operations 
$\utr,\otr:X\times X\to X$ satisfying for all $x,y,z\in X$
\begin{itemize}
\item[(i)] $x\otr x=x\utr x$,
\item[(ii)] The maps $\alpha_y,\beta_y:X\to X$ and $S:X\times X\to X\times X$
defined by $\alpha_y(x)=x\otr y$, $\beta_y(x)=x\utr y$ and 
$ S(x,y)=(y\otr x,x\utr y)$ are invertible, and
\item[(iii)] The \textit{exchange laws} are satisfied:
\[\begin{array}{rcl}
(x\utr y)\utr (z\utr y) & = & (x\utr z)\utr (y\otr x) \\
(x\otr y)\utr (z\otr y) & = & (x\utr z)\otr (y\utr x) \\
(x\otr y)\otr (z\otr y) & = & (x\otr z)\otr (y\utr x).
\end{array}\]
\end{itemize}
\end{df}

\begin{ex}
Some standard examples of biquandles include
\begin{itemize}
\item (\textit{Constant Action Biquandles}) For any set $X$ and bijection
 $\sigma:X\to X$, the operations \[x\utr y=x\otr y=\sigma (x)\] define a 
biquandle structure on $X$.
\item (\textit{Alexander Biquandles}) For any module $X$ over 
$\mathbb{Z}[t^{\pm 1},s^{\pm 1}]$, the operations
\[x\utr y=tx+(s-t)y,\quad x\otr y=sx\]
define a biquandle structure on $X$.
\item (\textit{Biquandles Defined by Operation Tables}) For a finite set 
$X=\{x_1,\dots, x_n\}$, we can specify the operation tables of a biquandle
structure with an $n\times 2n$ block matrix $M$ whose $(j,k)$ entry $x_l$ 
satisfies
\[x_l=\left\{\begin{array}{ll}
x_j\utr x_k & 1\le k\le n \\
x_j\otr x_k & n+1\le k\le 2n
\end{array}\right.\]
For instance, the Alexander biquandle $X=\mathbb{Z}_3=\{1,2,3\}$ (we use 3 
for the class of zero since our row and column numbers start at 1) with
$t=1$ and $s=2$ has matrix
\[\left[\begin{array}{rrr|rrr}
2 & 3 & 1 & 2 & 2 & 2 \\ 
3 & 1 & 2 & 1 & 1 & 1 \\
1 & 2 & 3 & 3 & 3 & 3
\end{array}\right].\]
\end{itemize}
\end{ex}

Unlike the case of quandles, the actions of $y\in X$ on $X$ given by 
$\alpha_y,\beta_y$ are not automorphisms of $X$ in general, but they are
still permutations of the elements of $X$ and generate a subgroup $CG$ of the 
symmetric group on $X$ known as the \textit{column group} in \cite{HN}.
The orbits of the action of $CG$ on $X$ partition $X$ into disjoint 
orbit sub-biquandles analogously to the quandle case.

\begin{ex}\label{ex:abq413}
The Alexander biquandle $X=\mathbb{Z}_4$ with $t=1$ and $s=3$ has matrix
\[M_X=\left[\begin{array}{rrrr|rrrr}
3 & 1 & 3 & 1 & 3 & 3 & 3 & 3 \\
4 & 2 & 4 & 2 & 2 & 2 & 2 & 2 \\
1 & 3 & 1 & 3 & 1 & 1 & 1 & 1 \\
2 & 4 & 2 & 4 & 4 & 4 & 4 & 4
\end{array}\right]\]
and thus has orbit sub-biquandles $O_1=\{1,3\}$ and $O_2=\{2,4\}$,
with matrices
\[M_{O_1}=\left[\begin{array}{rr|rr}
2 & 2 & 2 & 2 \\
1 & 1 & 1 & 1
\end{array}\right]\]
and
\[M_{O_2}=\left[\begin{array}{rr|rr}
1 & 1 & 1 & 1 \\
2 & 2 & 2 & 2 \\
\end{array}\right].\]
\end{ex}

\begin{df}\label{ex:4b}
A biquandle $X$ is \textit{quasi-trivial} if its orbit sub-biquandles are trivial,
i.e., if for all $x,y\in X$, if $x$ and $y$ are in the same orbit, then
$x\utr y= x\otr y=x$.
\end{df}

\begin{ex}\label{ex:redb}
The biquandle in Example \ref{ex:abq413} is \textit{not} a quasi-trivial biquandle 
since it has one trivial and one non-trivial orbit sub-biquandle. However,
the biquandle with operation matrix
\[M_X=\left[\begin{array}{rrrr|rrrr}
1 & 1 & 1 & 1 & 1 & 1 & 2 & 2 \\
2 & 2 & 2 & 2 & 2 & 2 & 1 & 1 \\
4 & 4 & 3 & 3 & 4 & 4 & 3 & 3 \\
3 & 3 & 4 & 4 & 3 & 3 & 4 & 4
\end{array}\right]\]
is a quasi-trivial biquandle since both of its orbit sub-biquandles $O_1=\{1,2\}$
and $O_2=\{3,4\}$ are isomorphic to the trivial biquandle on two elements.
\end{ex}

\begin{df}
Let $L$ be an oriented link diagram and $X$ a biquandle. Then a 
\textit{biquandle coloring of $L$ by $X$}, also called an
\textit{$X$-coloring} of $L$, is an assignment of elements of $X$ to the
semiarcs in $L$ (the edges in $L$ considered as a directed $4$-valent graph)
such that at each crossing we have one of the following local pictures:
\[\includegraphics{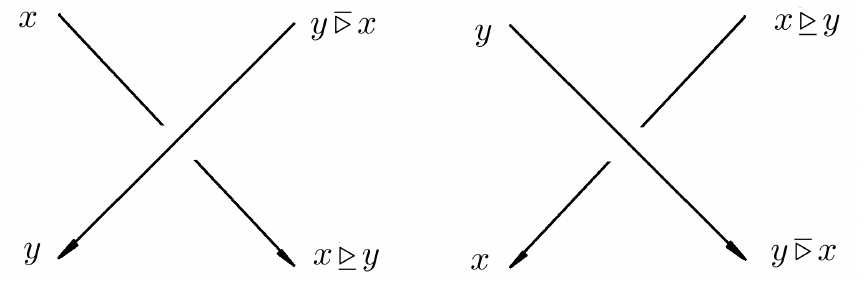}\]
\end{df}

The biquandle axioms are the conditions needed to guarantee that for any 
$X$-coloring of a link diagram before an oriented Reidemeister move, there is
a unique $X$-coloring of the resulting diagram after the move which agrees
with the original outside the neighborhood of the move; see \cite{EN} for more.
It follows that the number of $X$-colorings of an oriented link diagram is
an invariant of oriented links, which we denote by 
\[\Phi_X^{\mathbb{Z}}(L)=|\{X\mathrm{-colorings\ of}\ L\}|.\]

Exactly as in the quandle case, self-crossing changes in a link diagram do 
not change the set of colorings of a link by a quasi-trivial biquandle $X$ since 
the colors on the semiarcs at a self-crossing are from the same orbit.
Thus, we have
\begin{pro}
Let $L,L'$ be oriented links and $X$ a quasi-trivial biquandle. If $L$ and $L'$
are link homotopic, then $\Phi_X^{\mathbb{Z}}(L)=\Phi_X^{\mathbb{Z}}(L').$
\end{pro}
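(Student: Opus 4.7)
The plan is to reduce the statement to invariance under a single self-crossing change, and then to show that such a move does not alter the coloring count. Recall that link-homotopy is generated by oriented Reidemeister moves together with self-crossing changes (crossing changes at crossings where both strands belong to the same component). The biquandle axioms were designed precisely so that $\Phi_X^{\mathbb{Z}}$ is invariant under oriented Reidemeister moves (this is the discussion immediately preceding the proposition). Hence it is enough to fix a single self-crossing $c$ of a diagram $L$, produce the diagram $L'$ obtained by changing $c$, and exhibit a bijection between $X$-colorings of $L$ and $X$-colorings of $L'$.

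The main observation is that all semiarcs of a single component of $L$ receive colors lying in a common orbit of the column group $CG$. Indeed, as one traverses a component and passes through a crossing, the color on the outgoing semiarc is obtained from the color on the incoming semiarc by applying one of the maps $\alpha_y$, $\beta_y$ (or their inverses, at negative crossings), all of which lie in $CG$. Consequently, at the self-crossing $c$, the incoming semiarc colors $a$ and $b$ belong to the same orbit.

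The key step is to verify that, because $X$ is quasi-trivial, the local coloring equation at $c$ is automatically satisfied in both the positive and negative forms by the \emph{same} assignment $a,b$ on incoming semiarcs and $a,b$ on outgoing semiarcs. For a positive crossing, quasi-triviality gives $a\utr b = a$ and $b\otr a = b$ directly. For a negative crossing one needs the analogous identities for the inverse operations; these follow from the fact that $CG$ is a group, so $\beta_b^{-1}$ permutes the orbit of $b$: if $c=\beta_b^{-1}(a)$ then $c$ lies in the orbit of $a$, hence $\beta_b(c)=c$ by quasi-triviality, and therefore $c=a$, giving $a\utr^{-1} b = a$; the argument for $\otr^{-1}$ is identical.

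Putting this together, the map sending a coloring of $L$ to the same assignment of colors on $L'$ (the semiarcs are canonically identified away from $c$, and the four semiarcs at $c$ are unchanged by the crossing change) is well defined: the local rule at $c$ is satisfied in either the positive or negative form, while all other crossings are untouched. The inverse map is the same construction run backwards, yielding a bijection between the $X$-coloring sets of $L$ and $L'$ and thus $\Phi_X^{\mathbb{Z}}(L)=\Phi_X^{\mathbb{Z}}(L')$. The only genuinely non-formal point is the verification that quasi-triviality propagates to the inverse operations $\utr^{-1},\otr^{-1}$, which is where the group structure of $CG$ does the work.
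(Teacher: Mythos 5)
Your proof is correct and follows essentially the same route as the paper: the paper's argument is precisely that colors on the semiarcs at a self-crossing lie in a common orbit, so quasi-triviality makes the crossing condition vacuous and crossing changes preserve the coloring set, with Reidemeister invariance handling the rest. Your additional verification that the inverse operations also act trivially on an orbit (via the column group) is a detail the paper leaves implicit, but it is the same argument.
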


\begin{ex}
The Hopf link and the $(4,2)$-torus link have 8 and 16 colorings
respectively by the quasi-trivial biquandle $X$ in Example \ref{ex:redb}, so the
biquandle counting invariant $\phi_X^{\mathbb{Z}}$ detects that these two links 
are not link-homotopic.
\end{ex}

Now, let $R$ be a commutative ring, let $C_n(X;R)=R[X^n]$, be the free 
$R$-module 
on ordered $n$-tuples of 
elements of $X$ and let $C^n(x;R)=\mathrm{Hom}(C_n(X;R),R)$. Then the maps
\[\partial_n:C_n(X;R)\to C_{n-1}(X;R)\quad \mathrm{and} \quad
\delta^n:C^{n}(X;R)\to C^{n+1}(X;R)
\]
defined by
\[\partial_n(\vec{x})=\sum_{k=1}^n(-1)^k[\partial_n^{0,k}(\vec{x})-\partial_n^{1,k}(\vec{x})]\]
where 
\begin{eqnarray*}
\partial_n^{0,k}(x_1,\dots, x_n) & = & (x_1,\dots, x_{k-1},x_{k+1},\dots,x_n), \\
\partial_n^{1,k}(x_1,\dots, x_n) & = & (x_1\utr x_k,\dots, x_{k-1}\utr x_k,x_{x+1}\otr x_k,\dots,x_n\otr x_k) \\
\end{eqnarray*}
and for $f:C_n(X,R)\to R$ we have
\[\delta^n(f)=f\partial_{n+1}\]
define a chain complex $(C_n,\partial_n)$ and cochain complex $(C^n,\delta^n)$
whose homology and cohomology groups $H_n^{BR}(X;R)$ and $H^n_{BR}(X;R)$ are the 
\textit{birack homology and cohomology} of $X$. The subcomplex 
$(C^D_n(X;R),\partial_n)$ generated by elements of the form $(x_1,\dots,x_n)$ 
where $x_i=x_{i+1}$ for some $i=1,\dots, n-1$ and its dual
are the \textit{degenerate subcomplexes}, modding out by which
yields the \textit{biquandle chain complex} $(C^B_n(X;R),\partial_n)$
and \textit{biquandle cochain complex} $(C_B^n(X;R),\delta^n)$ yielding the
\textit{biquandle homology and cohomology} $H^B_n(X;R)$ and $H_B^n(X;R)$
of $X$.

For quasi-trivial biquandles we can generalize this degenerate subcomplex 
further:
say a chain is \textit{quasi-trivial} if it is an $R$-linear combinations of 
tuples of the form $(x_1,\dots x_n)$ where all $x_j$ lie in the same component
sub-biquandle of $X$. The quasi-trivial chains $C^{DR}_n(X;R)$ clearly form a 
subcomplex
of the birack complex of $X$; indeed, it is exactly the direct sum of the 
birack complexes of the orbit sub-biquandles considered separately as 
biquandles in their own right. Hence we
can form the quotient complex $(C^{QT}_n(X;R)=C^{B}_n(X;R)/C^{DR}_n(X;R)$
and its dual subcomplex to obtain the \textit{quasi-trivial biquandle 
homology and cohomology} of $X$, $H^{QT}_n(X;R)$ and $H_{QT}^n(X;R)$. Such 
quasi-trivial cocycles enable us to enhance the link-homotopy biquandle 
counting invariant.

\begin{df}
Let $X$ be a quasi-trivial biquandle, $R$ a commutative ring and 
$\phi\in C^2_{QT}(X;R)$ a quasi-trivial biquandle 2-cocycle. Then for an oriented
link diagram $D$ representing a link $L$, we define a multiset 
$\Phi_X^{M,\phi}(L)$ as follows:
\begin{itemize}
\item For each $X$-coloring $D_f$ of $D$, we sum the contributions from 
each crossing as depicted to obtain the \textit{Boltzmann weight} 
$BW(D_f)\in R$
\[\includegraphics{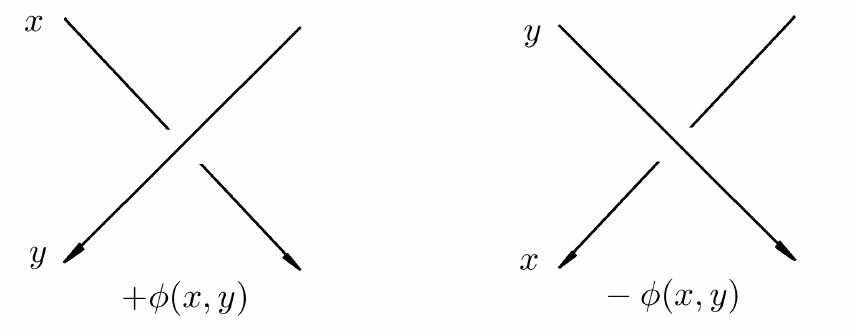}\]
\item We collect these Boltzmann weights over the set of all $X$-colorings of
$D$ to obtain the multiset 
\[\Phi_X^{M,\phi}(L)=\{BW(D_f)\ |\ D_f\ X\mathrm{-coloring\ of\ } D\}.\]
\item If $R=\mathbb{Z}$ or $\mathbb{Z}_n$, we can write a ``polynomial'' form
of the multiset:
\[\Phi_X^{\phi}(L)=\sum_{D_f\ X\mathrm{-coloring\ of\ }D} u^{BW(D_f)}.\]
\end{itemize}
\end{df}
 
It is a standard observation (see \cite{EN}) that $\Phi_X^{M,\phi}(L)$ and
$\Phi_X^{\phi}(L)$ are unchanged by Reidemeister moves; we now observe that
the contribution to each Boltzmann weight from single-component crossings
is zero for any $\phi\in C^2_{QT}(X;R)$, so these invariants are unchanged by
self-crossing changes. Hence we obtain

\begin{pro}
Let $X$ be a quasi-trivial biquandle and $\phi\in C^2_{QT}(X;R)$ a quasi-trivial cocycle
with values in a commutative ring $R$. Then if $L$ and $L'$ are link-homotopic,
we have $\Phi_X^{\phi}(L)=\Phi_X^{\phi}(L')$ and 
$\Phi_X^{M,\phi}(L)=\Phi_X^{M,\phi}(L')$.
\end{pro}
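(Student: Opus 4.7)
The plan is to separate the verification into invariance under the three oriented Reidemeister moves and invariance under self-crossing changes, since these operations together generate link-homotopy equivalence.

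For the Reidemeister moves, note that by construction $C^n_{QT}(X;R)$ sits inside the biquandle cochain complex $C^n_B(X;R)$ as the annihilator of the quasi-trivial subcomplex $C^{DR}_n(X;R)$. Every $\phi\in C^2_{QT}(X;R)$ is therefore in particular a biquandle $2$-cocycle, so the standard Boltzmann-weight argument from \cite{EN} applies verbatim to give invariance of both $\Phi_X^{M,\phi}(L)$ and $\Phi_X^{\phi}(L)$ under oriented Reidemeister moves.

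The new ingredient is invariance under a self-crossing change. Let $D$ and $D'$ be oriented diagrams of $L$ and $L'$ agreeing except at a single crossing $c$ whose two strands belong to the same component $K$. I first claim that in any $X$-coloring $D_f$ of $D$, all semiarcs of $K$ are colored by elements of a common orbit sub-biquandle of $X$: traversing a crossing applies one of $\alpha_y^{\pm 1},\beta_y^{\pm 1}$ to the color on a semiarc, and these maps are precisely generators of the column group $CG$, which by definition preserves orbits. Hence the two incoming colors $x,y$ at $c$ lie in a common orbit $O$. Quasi-triviality then forces
\[x\utr y=x\otr y=x \qquad\text{and}\qquad y\utr x=y\otr x=y,\]
so the outgoing colors at $c$ equal the incoming ones regardless of which strand is the overstrand or of the sign of the crossing. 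Thus $f$ extends uniquely to an $X$-coloring $f'$ of $D'$ agreeing with $f$ outside $c$, giving a canonical bijection between $X$-colorings of $D$ and of $D'$.

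To finish, I compare Boltzmann weights under this bijection. The local contribution at $c$ to $BW(D_f)$ is $\pm\phi(x,y)$ with $x,y$ the two incoming colors; since both lie in $O$, the pair $(x,y)$ is a quasi-trivial $2$-chain in $C^{DR}_2(X;R)$, and $\phi\in C^2_{QT}(X;R)$ annihilates such chains by definition, so the contribution at $c$ vanishes in both $D$ and $D'$. Every other crossing and its contribution is unchanged, yielding $BW(D_f)=BW(D'_{f'})$ for matched colorings and hence the asserted equalities $\Phi_X^{M,\phi}(L)=\Phi_X^{M,\phi}(L')$ and $\Phi_X^{\phi}(L)=\Phi_X^{\phi}(L')$. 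The step most in need of care is the first one: confirming rigorously that the four semiarcs incident to a self-crossing all carry colors from a single orbit, which is where the column group interpretation of orbits becomes essential.
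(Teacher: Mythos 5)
Your proposal is correct and follows essentially the same route as the paper: Reidemeister invariance is the standard biquandle cocycle argument (any $\phi\in C^2_{QT}(X;R)$ is in particular a biquandle $2$-cocycle), self-crossing changes preserve colorings because quasi-triviality makes the colors pass through a same-orbit crossing unchanged, and the local Boltzmann contribution at such a crossing vanishes since $\phi$ annihilates same-orbit pairs. The only difference is that you spell out, via the column group, why all semiarcs of a single component carry colors from one orbit, a point the paper simply asserts.
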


\begin{ex}
Inoue used a $12$-element quasi-trivial quandle showing that the Borromean rings $L6a1$ are not trivial under link-homotopy (see figure $5$ on page $8$ in~\cite{I}).  Here we give 
another proof using a smaller quasi-trivial biquandle. Let $X$ be the $4$-element
biquandle in Example \ref{ex:redb}. Then both the Borromean rings and the unlink
of three components U3 have 64 colorings by $X$, but the cocycle enhancement 
with $\phi:X\times X\to\mathbb{Z}_3$ defined by $\phi=\chi_{(3,2)}+\chi_{(4,2)}$ 
detects the difference with $\Phi^{\phi}_X(L6a1)=48u+16\ne64=\Phi^{\phi}_X(U3)$.
\end{ex}

\begin{ex}
Let $X$ be the quasi-trivial biquandle with operation matrix
\[M_X=\left[\begin{array}{rrrrr|rrrrr}
1 & 1 & 1 & 2 & 3 & 1 & 1 & 1 & 1 & 3 \\
2 & 2 & 2 & 3 & 1 & 2 & 2 & 2 & 2 & 1 \\
3 & 3 & 3 & 1 & 2 & 3 & 3 & 3 & 3 & 2 \\
4 & 4 & 4 & 4 & 4 & 4 & 4 & 4 & 4 & 4 \\
5 & 5 & 5 & 5 & 5 & 5 & 5 & 5 & 5 & 5  
\end{array}\right].\]
We selected three quasi-trivial 2-cocycles over $R=\mathbb{Z}_3,$
\begin{eqnarray*}
\phi_1 & = & 2\chi_{2,4}+2\chi_{2,5}+2\chi_{3,4}+2\chi_{4,5}+\chi_{5,2} \\
\phi_2 & = & 2\chi_{2,5}+2\chi_{3,4}+2\chi_{3,5}+2\chi_{4,1}+2\chi_{4,2}+2\chi_{4,5}+2\chi_{5,1}+2\chi_{5,4}\\
\phi_3 & = & \chi_{1,5}+2\chi_{3,4}+\chi_{4,1}+\chi_{4,2}+\chi_{4,3}+2\chi_{4,5}+\chi_{5,1}+2\chi_{5,2}+2\chi_{5,3}\\
\end{eqnarray*}
found by our \texttt{Python}
code and computed the invariant values for a choice of orientation of 
each of the prime links with up to seven crossings as listed at \cite{KA}.
The results are collected in the table.
\[\scalebox{0.8}{$
\begin{array}{|c|ccccc|}\hline
 L& L2a1 & L4a1 & L5a1 & L6a1 & L6a2  \\ \hline
\Phi_X^{\phi_1} & 2u^2+17 & 2u + 17 & 25 & 2u + 17 & 6u+19  \\
\Phi_X^{\phi_2} & 6u^2+2u+11 & 2u^2+6u+11& 25 & 2u^2+6u+11 & 6u^2+19 \\
\Phi_X^{\phi_3} & 8u^2+11 &8u^2+11 & 25 & 8u+11 & 6u^2+19   \\\hline
 L&  L6a3  &L6a4 & L6a5 &L6n1&  L7a1   \\ \hline
\Phi_X^{\phi_1} & 6u+19 & 6u+38 &  6u+65 & 6u+65 & 25\\
\Phi_X^{\phi_2} & 6u^2+19 & 6u^2+9u+29 &6u^2+36u+29& 6u^2+36u+29 &25\\
\Phi_X^{\phi_3} & 6u^2+19 & 15u+29 & 42u+29 & 42u+29 & 25\\\hline
L &  L7a2 & L7a3& L7a4 & L7a5 & L7a6   \\ \hline
\Phi_X^{\phi_1} & 2u+17 & 25 & 25 & 2u+17 & 2u^2+17 \\
\Phi_X^{\phi_2} & 2u^2 + 6u+11 & 25 & 25 & 6u^2+2u+11 &6u^2+2u+11 \\
\Phi_X^{\phi_3} & 8u^2+11 & 25 & 25 & 8u^2+11 & 8u^2+11 \\ \hline
L & & L7a7 & L7n1 &L7n2 & \\ \hline
\Phi_X^{\phi_1} & & 2u+75 & 2u^2+17 & 25 &\\
\Phi_X^{\phi_2} & & 2u^2+12u+63 & 6u^2+2u+11 &25 &\\
\Phi_X^{\phi_3} & & 14u+63 & 8u^2+11 & 25 &\\ \hline
\end{array}$}
\]
\end{ex}

\begin{ex}
As with other biquandle cocycle invariants, quasi-trivial biquandle cocycle
invariants in general are sensitive to mirror image. $\Phi_X^{\phi}$ with
quasi-trivial biquandle $X$ given by
\[M_x=\left[\begin{array}{rrrr|rrrr}
1 & 1 & 2 & 2 & 1 & 1 & 2 & 2 \\
2 & 2 & 1 & 1 & 2 & 2 & 1 & 2 \\
3 & 3 & 3 & 3 & 4 & 4 & 3 & 3 \\
4 & 4 & 4 & 4 & 3 & 3 & 4 & 4
\end{array}\right]\] and 2-cocycle $\phi\in C^2(X;\mathbb{Z}_8)$ given by
\[\phi=3\chi_{1,4}+6\chi_{2,3}+\chi_{2,4}+2\chi_{3,1}+4\chi_{3,2}+2\chi_{4,1}+4\chi_{4,2}\]
distinguishes the $(4,2)$-torus link $L4a1$ from its mirror image 
$\overline{L4a1}$ with $\Phi_X^{\phi}(L4a1)=8u^7+8\ne8u+8=\Phi_X^{\phi}(\overline{L4a1})$. 
\[\includegraphics{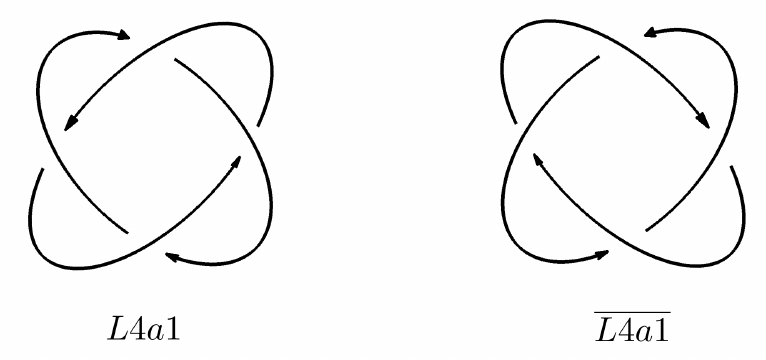}\]
\end{ex}

\section{Open Questions}
In this section we end with some open problems for future research.
\begin{itemize}
	\item
	Habegger and Lin \cite{HL} gave an algorithm determining when two links are equivalent under link-homotopy.  Their algorithm uses an inductive computation of certain cosets in free abelian groups.  Can one give a more easily computable algorithm determining when two pretzel links $(p_1, \ldots, p_m)$ and $(q_1, \ldots, q_n)$ are equivalent under link-homotopy?
	
	\item
	 Can the results on pretzel links in this paper be generalized to Montesinos links?  In particular when is a Montesinos link homotopically trivial?
	
	\item Beyond cocycle enhancements, what other enhancements of the quandle and biquandle counting invariant can detect link homotopy class?
	
\end{itemize}

\subsection*{Acknowledgements}
The authors would like to thank Francis Bonahon for fruitful discussions.
The third listed author was partially supported by Simons Foundation 
collaboration grant 316709.

  \end{document}